\documentclass[11pt]{amsart}

\usepackage[T1]{fontenc}
\usepackage[utf8]{inputenc}
\usepackage{geometry}
\usepackage{amssymb}
\usepackage{amsmath}
\usepackage{mathtools}
\usepackage{xcolor}
\usepackage[colorlinks=true,citecolor=red,linkcolor=blue,urlcolor=red]{hyperref}

\DeclareMathOperator{\conv}{conv}
\DeclareMathOperator{\Vertt}{Vert}
\DeclareMathOperator{\lct}{lct}
\DeclareMathOperator{\barc}{bar}

\newcommand{\RR}{\mathbb{R}}
\newcommand{\ZZ}{\mathbb{Z}}
\newcommand{\QQ}{\mathbb{Q}}
\newcommand{\PP}{\mathbb{P}}

\newtheorem{thm}{Theorem}[section]

\newtheorem{prop}[thm]{Proposition}

\newtheorem{conj}[thm]{Conjecture}

\theoremstyle{definition}

\newtheorem{ques}[thm]{Question}
\newtheorem{rem}[thm]{Remark}

\numberwithin{equation}{section}

\begin{document}

\title[Sharpness of Tian's criterion for K-stability]{K-polystable toric Fano varieties with small alpha invariants}

\author{Jihao Liu}
\address{Department of Mathematics, Peking University, No. 5 Yiheyuan Road, Haidian District, Beijing 100871, China}
\address{Beijing International Center for Mathematical Research, Peking University, No. 5 Yiheyuan Road, Haidian District, Beijing 100871, China}
\email{liujihao@math.pku.edu.cn}

\author{Ziwen Zhu}
\address{Department of Mathematics, Tongji University, No. 1239 Siping Road, Yangpu District, Shanghai 200092, China}
\email{zzhu@tongji.edu.cn}

\subjclass[2020]{14J45, 32Q20, 14M25, 52B20}
\keywords{Tian's criterion, alpha invariant, K-stability, K-semistable Fano variety, toric variety, reflexive polytope}
\date{}

\begin{abstract}
For every $n\geq 2$, we exhibit an $n$-dimensional K-polystable toric $\QQ$-Fano variety $X_n$, defined by the face fan of an explicit lattice polytope, and whose alpha invariant is exactly $\tfrac{2}{2n+1}$. This answers a question of Liu and Zhuang whether there exists an $n$-dimensional K-semistable $\QQ$-Fano variety whose alpha invariant is between $\tfrac{1}{n+1}$ and $\tfrac1n$. The main result of this paper was obtained by Chatgpt 5.5 pro, and the Danus system based on the Rethlas system.
\end{abstract}

\maketitle

\section{Introduction}\label{sec:introduction}
Let $X$ be a $\QQ$-Fano variety. 
The alpha invariant (also known as global log canonical threshold) of $X$, denoted by $\alpha(X)$, measures the worst singularities in the anti-pluricanonical systems, and is defined by
\[
\alpha(X)=\inf\{\lct(X;D)|{D\sim_{\QQ}-K_X,\ D\geq 0}\}.
\]

Tian's criterion, formulated in terms of the alpha invariant, is one of the foundational sufficient conditions for K-stability of Fano varieties: an $n$-dimensional Fano variety with alpha invariant  greater than (or no less than) $\tfrac{n}{n+1}$ is K-(semi)stable. Nevertheless, there exist many examples of K-semistable Fano varieties whose alpha invariants fall below $\tfrac{n}{n+1}$. The following theorem in \cite{FO16} establishes a lower bound for the alpha invariants of K-semistable Fano varieties. 
\begin{thm}[\cite{FO16}]
Let $X$ be a K-semistable $\QQ$-Fano variety of dimension $n$. Then $\alpha(X)\geq \tfrac{1}{n+1}$.
\end{thm}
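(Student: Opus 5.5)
We prove the theorem by comparing the delta invariant with the alpha invariant, using the valuative description of both. Recall that for a $\QQ$-Fano variety $X$ and a prime divisor $E$ over $X$ we have the log discrepancy $A_X(E)$, the pseudo-effective threshold $T(E)=\sup\{t\mid -K_X-tE \text{ is pseudo-effective}\}$, and the expected vanishing order
\[
S(E)=\frac{1}{\mathrm{vol}(-K_X)}\int_0^{T(E)}\mathrm{vol}(-K_X-tE)\,dt .
\]
By the valuative criterion of Fujita and Li, $X$ is K-semistable if and only if $\delta(X)=\inf_E A_X(E)/S(E)\geq 1$. The alpha invariant has the analogous description $\alpha(X)=\inf_E A_X(E)/T(E)$. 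This holds because $\lct(X;D)=\inf_E A_X(E)/\mathrm{ord}_E(D)$, and $\sup_{D\sim_\QQ -K_X}\mathrm{ord}_E(D)=T(E)$, where the sup runs over effective $\QQ$-divisors and is approximated by $D$ in $|-mK_X|/m$.

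The key step is the inequality $S(E)\geq \frac{1}{n+1}T(E)$ for every $E$. Set $L=-K_X$ and $f(t)=\mathrm{vol}(L-tE)^{1/n}$ on $[0,T]$. By the Brunn–Minkowski (log-concavity) property of the volume function on the big cone, $f$ is concave; this is standard, and follows for instance from the Okounkov body construction. Since $f\geq 0$ near $t=T$, concavity gives $f(t)\geq (1-t/T)f(0)$, and hence
\[
\mathrm{vol}(L-tE)\geq (1-t/T)^n\,\mathrm{vol}(L).
\]
Integrating yields $S(E)\geq \int_0^T(1-t/T)^n\,dt=\frac{T}{n+1}$. An equivalent route uses the Okounkov body $\Delta$ of $L$ with respect to a flag starting at $E$: $S(E)$ is the first coordinate of the barycenter of $\Delta$, $T(E)$ is the maximal first coordinate, and the classical convex-body fact gives barycenter $\geq$ width$/(n+1)$.

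Combining these, for every $E$ we get
\[
\frac{A_X(E)}{T(E)}=\frac{A_X(E)}{S(E)}\cdot\frac{S(E)}{T(E)}\geq \delta(X)\cdot \frac{1}{n+1}.
\]
Taking the infimum over $E$ gives $\alpha(X)\geq \delta(X)/(n+1)$, and K-semistability gives $\delta(X)\geq 1$, hence $\alpha(X)\geq\frac{1}{n+1}$.

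The main technical point is justifying concavity of $\mathrm{vol}^{1/n}$ along the segment $L-tE$ when $E$ is only a divisor over $X$. We handle this by passing to a birational model $Y\to X$ on which $E$ lives and working with $\pi^*L-tE$ on $Y$, where volumes are unchanged. This is harmless, so the argument is essentially formal once the valuative criteria are granted.
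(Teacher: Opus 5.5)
The paper gives no proof of this statement. It is quoted from \cite{FO16} only as background, so there is no in-paper argument to compare with.

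Your proof is correct, and it is the standard valuative argument. The Fujita--Li criterion gives $A_X(E)\geq S(E)$ for every prime divisor $E$ over $X$. Concavity of $t\mapsto \mathrm{vol}(\pi^*(-K_X)-tE)^{1/n}$ gives $S(E)\geq T(E)/(n+1)$. Together these are exactly the inequality $\delta(X)\leq (n+1)\alpha(X)$ of \cite{BJ17}, combined with $\delta(X)\geq 1$.

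Two points could be tightened, though neither is a gap:
\begin{itemize}
\item For the valuative description of $\alpha$, you only need the easy bound $\mathrm{ord}_E(D)\leq T(E)$ for effective $D\sim_\QQ -K_X$. This gives $\alpha(X)\geq\inf_E A_X(E)/T(E)$, which is the only direction used.
\item Instead of ``$f\geq 0$ near $t=T$'', it is cleaner to apply concavity on $[0,s]$ for $s<T$. That gives $f(t)\geq(1-t/s)f(0)$, and then you let $s\to T$. Alternatively, invoke continuity of the volume to include the endpoint, where $f(T)=0$.
\end{itemize}
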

This lower bound is sharp, as the $n$-dimensional projective space $\PP^n$ attains precisely $\tfrac{1}{n+1}$ as its alpha invariant. Moreover, it is shown in \cite{Jia17} that $\PP^n$ is the only $n$-dimensional K-semistable Fano manifold achieving this minimum value of alpha invariants. Since the alpha invariant is lower semicontinuous in a family due to \cite{BL22}, a natural question is whether we can identify the second smallest alpha invariant among all K-semistable Fano varieties. The following conjecture was first proposed by Jiang in \cite{Jia17}.
\begin{conj}[{\cite[Conjecture 1.6]{Jia17}}]\label{conj:jiang}
    Let $X$ be a K-semistable Fano manifold.
Then $\alpha(X)<\frac{1}{n}$
if and only if $X\cong \PP^n$.
\end{conj}

This conjecture was later reformulated in \cite{LZ22} as the following question.

\begin{ques}[{\cite[Question~1.5(1)]{LZ22}}]\label{ques:lz}
Let $n\geq 2$ be an integer. Does there exist a K-semistable $\QQ$-Fano variety $X$ of dimension $n$ such that
\[
\frac{1}{n+1}<\alpha(X)<\frac1n\ ?
\]
\end{ques}

The purpose of this note is to answer Question~\ref{ques:lz} affirmatively for every $n\geq 2$, by an explicit family of toric varieties whose alpha invariant lands strictly inside the prescribed interval at the single value $\tfrac{2}{2n+1}$.

\begin{thm}\label{thm:main}
For every integer $n\geq 2$ there exists an $n$-dimensional toric $\QQ$-Fano variety $X_n$ which is K-polystable, hence K-semistable, and satisfies
\begin{equation}\label{eq:alpha-value}
\alpha(X_n)=\frac{2}{2n+1}.
\end{equation}
\end{thm}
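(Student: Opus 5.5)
The plan is to reduce everything to convex geometry of the polytope and its dual. Let $X_n$ be the toric variety of the face fan of a lattice polytope $P\subset N_\RR\cong\RR^n$ that contains the origin in its interior and whose vertices $v_1,\dots,v_k$ are primitive. Then $X_n$ is $\QQ$-Fano, and its anticanonical polytope is the dual
\[
\Delta=P^\ast=\{m\in M_\RR : \langle m,v_i\rangle\geq -1 \text{ for all } i\}.
\]
I will use two standard toric facts. First, $X_n$ is K-polystable if and only if the barycenter $\barc(\Delta)$ equals $0$ (Wang--Zhu, Berman). Second, $\alpha$ can be computed from torus-invariant divisors (Cheltsov--Shramov, Blum--Jonsson). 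Concretely, a torus-invariant $D\sim_\QQ -K_{X_n}$ corresponds to a point $m\in\Delta$ via $D_m=\sum_i(\langle m,v_i\rangle+1)D_i$. The log canonical condition for $cD_m$ is a comparison of two functions that are linear on each cone of the fan, so it only has to be checked on rays. Hence
\[
\alpha(X_n)=\Bigl(\max_{m\in\Delta}\max_i\bigl(\langle m,v_i\rangle+1\bigr)\Bigr)^{-1}
=\Bigl(1+\max_{u\in \Vertt(\Delta)}\max_i\langle u,v_i\rangle\Bigr)^{-1}.
\]
Thus \eqref{eq:alpha-value} is equivalent to
\[
\max_{u\in\Vertt(\Delta),\,i}\langle u,v_i\rangle=\tfrac{2n-1}{2}.
\]
For comparison, the simplex defining $\PP^n$ gives $n$ here.

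The construction will therefore aim for a $P$ close to the $\PP^n$ simplex $\conv(e_1,\dots,e_n,-e_1-\dots-e_n)$, modified so that the worst vertex pairing drops from $n$ to $n-\tfrac12$ while the barycenter of $\Delta$ stays at $0$. Two devices are available. One can add a few extra vertices, for instance points like $-e_1-\dots-e_{n-1}$ or a reflected copy of some $e_j$, which cut off the far vertices of $\Delta$. Alternatively, one can use a polytope with a symmetry group acting transitively enough that the only fixed point is $0$; then $\barc(\Delta)=0$ is automatic. I will first try a symmetric choice, such as $P$ invariant under the permutation group $S_n$ acting on the coordinates together with an involution. Symmetry forces the barycenter onto a line or to $0$, and I would then tune a single parameter (the position of one extra vertex) so that the barycenter is exactly $0$.

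With an explicit $P$ in hand, the steps are as follows.
\begin{enumerate}
\item Check that the vertices are primitive and that $0$ lies in the interior of $P$, so that $X_n$ is a $\QQ$-Fano variety.
\item List the vertices of $\Delta$ by intersecting the facet hyperplanes $\langle m,v_i\rangle=-1$.
\item Compute $\max\langle u,v_i\rangle$ over all pairs and verify that it equals $\tfrac{2n-1}{2}$, with the maximum attained.
\item Compute $\barc(\Delta)$.
\end{enumerate}
For step (4), I would decompose $\Delta$ into simplices or pyramids over its facets with apex $0$. The barycenter is then the volume-weighted average of the pyramid barycenters, each of which is $\tfrac{n}{n+1}$ times the barycenter of the corresponding facet. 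Symmetry reduces this to one or two facet types.

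I expect the main obstacle to be making the barycenter vanish for all $n$ simultaneously while the vertex-pairing maximum stays exactly at $\tfrac{2n-1}{2}$. Adding vertices to $P$ to lower the maximum tends to move the barycenter, and compensating changes can reintroduce larger pairings. I would first search small cases ($n=2,3$) among reflexive-like polygons and polytopes for a pattern that generalizes uniformly. I would then verify the barycenter identity in general by an inductive volume computation (pyramid or cone decompositions along one coordinate), and confirm the vertex pairings by direct inspection of the explicit vertex list of $\Delta$.
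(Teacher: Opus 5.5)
Your reduction is correct and is the one the paper uses. For the face fan of a polytope with primitive lattice vertices and $0$ in its interior, the anticanonical polytope is the polar, K-polystability is Berman's barycenter criterion, and the Blum--Jonsson formula turns \eqref{eq:alpha-value} into the statement that the maximal pairing between the anticanonical polytope and the ray generators is $\tfrac{2n-1}{2}$. But the theorem is an existence statement, and nearly all of its content is the construction, which you do not supply. ``Try a symmetric choice'', ``tune a single parameter'' and ``search small cases for a pattern'' describe a search, not an argument. Your steps (1)--(4) therefore have nothing to act on, and the obstacle you name (barycenter versus pairing) stays open. This is a genuine gap.

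Two of your suggested devices are also problematic. If the involution you add to $S_n$ is $x\mapsto -x$, then $P$ is centrally symmetric, so $-v$ is a ray generator whenever $v$ is. This forces $\langle u,v\rangle\le 1$ for all $u\in\Delta$, hence $\alpha\ge\tfrac12>\tfrac{2}{2n+1}$. And tuning a parameter until the barycenter vanishes exactly is unreliable when the vertices must be lattice points.

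The missing idea is to define the anticanonical polytope first, as the convex hull of orbits of a symmetry with no nonzero fixed vector, and only then take its polar. (The paper's $P_n$ is your $\Delta$, and its $Q_n$ is your $P$.) Concretely, take $M=\{x\in\ZZ^{n+1}:\sum_i x_i=0\}$, $N=\ZZ^{n+1}/\ZZ(1,\dots,1)$, $\ell=2n-1$, indices mod $n+1$, and
\[
P_n=\conv\bigl\{\tfrac12(e_i-e_{i+1}),\ \tfrac1\ell(e_i-e_{i-1})\bigr\}.
\]
In the difference coordinates $d_i=c_{i+1}-c_i$ on $N$, these generators pair with $[c]$ as $-d_i/2$ and $d_{i-1}/\ell$. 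So $Q_n=P_n^\vee$ is the box slice $\{\sum_i d_i=0,\ -\ell\le d_i\le 2\}$. A short counting argument using $\gcd(\ell,\ell+2)=1$ shows:
\begin{itemize}
\item the vertices of $Q_n$ are exactly the permutations of $(-\ell,1,2,\dots,2)$;
\item they are primitive because of the entry $1$;
\item the maximal pairing is $\ell/2$, attained by $\tfrac12(e_p-e_{p+1})$ against a vertex with $d_p=-\ell$.
\end{itemize}
Geometrically, $Q_n$ is the simplex $\{d_i\le 2\}$, whose worst pairing is $n$, with each vertex shaved off by one unit. This is exactly your heuristic of lowering $n$ to $n-\tfrac12$.

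The barycenter then needs no computation. The cyclic shift of coordinates preserves $M$ and $P_n$ and fixes only $0$ in $M_\RR$, so $\barc(P_n)=0$ for every $n$. This order-$(n+1)$ symmetry contains no antipodal map when $n\ge 2$, which is why it is compatible with the small value of $\alpha$.
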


We note that the toric examples constructed in this note are singular. Consequently, Conjecture \ref{conj:jiang} may still hold. However, Theorem \ref{thm:main} naturally leads to the following new question.
\begin{ques}\label{ques:new}
Let $n\geq 2$ be an integer. Does there exist a K-semistable $\QQ$-Fano variety $X$ of dimension $n$ such that
\[
\frac{1}{n+1}<\alpha(X)<\frac{2}{2n+1}\ ?
\]
\end{ques}

The construction in this note is purely combinatorial. We work in a pair of dual rank-$n$ lattices arising from the standard hyperplane $\sum x_i=0$ in $\RR^{n+1}$ and its dual quotient. In the character space we form a rational polytope $P_n$ as the convex hull of two cyclically indexed families of vectors $a_i$ and $b_i$, and we let $Q_n=P_n^\vee$ be its polar. Passing to cyclic difference coordinates turns $Q_n$ into a transparent hyperplane slice of a coordinate box, whose vertices, lattice points, and extremal pairings with $P_n$ can be read off directly (Section~\ref{sec:polytope}). The variety $X_n$ is then the toric variety of the face fan of $Q_n$: its rays are generated by the vertices of $Q_n$, and its anticanonical polytope is $P_n$ (Section~\ref{sec:variety}). The cyclic symmetry forces the barycenter of $P_n$ to be the origin, which by the toric criterion of Berman gives K-polystability; the maximum pairing $\max_{u\in P_n,\,v\in\Vertt(Q_n)}\langle u,v\rangle=\tfrac{2n-1}{2}$, fed into the toric formula for the alpha invariant, gives \eqref{eq:alpha-value} (Section~\ref{sec:kstab-alpha}).

\begin{rem}\label{rem:rethlas}
The sketch of the proof of the main result of this paper was obtained by Chatgpt 5.5 pro, and later summed up, verified, and properly written by the Danus system, a specialized agent built on Rethlas and substantially more capable for fundamental mathematical research
based on the Rethlas system. Human verification and polishing were done afterwards. See \cite{Ju+26} for a detailed introduction to the Rethlas system. Due to the limitation of automated systems, it is possible that we have missed some related references in the literature, and we welcome any comments from experts.
\end{rem}

\subsection*{Acknowledgements}
The first author was partially supported by the National Key R\&D Program of China \#\allowbreak 2024YFA1014400.
The first author would like to thank the Rethlas team, namely Haocheng Ju, Jiedong Jiang, Shurui Liu, Guoxiong Gao, Yuefeng Wang, Zeming Sun, Bin Wu, Liang Xiao, and Bin Dong, for their contributions to the development of Rethlas and its customized version used for the problem studied in this paper.
The first author would like to thank Ruochuan Liu and Gang Tian for constant support and encouragement.

\section{The dual polytopes}\label{sec:polytope}

Throughout, fix an integer $n\geq 2$ and set $\ell=2n-1$. Indices are read modulo $n+1$. Let $e_0,\dots,e_n$ be the standard coordinate vectors of $\RR^{n+1}$, and put
\[
M_\RR=\Bigl\{x=(x_0,\dots,x_n)\in\RR^{n+1}:\sum_{i=0}^n x_i=0\Bigr\},
\qquad
N_\RR=\RR^{n+1}/\RR(1,\dots,1),
\]
paired by $\langle x,[c]\rangle=\sum_{i=0}^n x_i c_i$. These are dual real vector spaces of dimension $n$, the underlying lattices being
\[
M=\Bigl\{x\in\ZZ^{n+1}:\sum_{i=0}^n x_i=0\Bigr\},
\qquad
N=\ZZ^{n+1}/\ZZ(1,\dots,1).
\]
In $M_\RR$ define
\begin{equation}\label{eq:ai-bi}
a_i=\frac{e_i-e_{i+1}}{2},
\qquad
b_i=\frac{e_i-e_{i-1}}{\ell},
\qquad 0\leq i\leq n,
\end{equation}
and set
\begin{equation}\label{eq:Pn}
P_n=\conv\bigl(\{a_i:0\leq i\leq n\}\cup\{b_i:0\leq i\leq n\}\bigr).
\end{equation}
Let
\begin{equation}\label{eq:Qn}
Q_n=P_n^\vee=\{y\in N_\RR:\langle u,y\rangle\geq-1\text{ for every }u\in P_n\}
\end{equation}
be the polar polytope of $P_n$. For a class $[c]\in N_\RR$ we record the \emph{cyclic difference coordinates}
\[
d_i=c_{i+1}-c_i,\qquad 0\leq i\leq n,
\]
which satisfy $\sum_{i=0}^n d_i=0$ by telescoping. For an ordered pair $p\neq q$ in $\{0,\dots,n\}$ let $v_{p,q}\in N_\RR$ be the class whose difference coordinates are
\begin{equation}\label{eq:vpq}
d_p=-\ell,\qquad d_q=1,\qquad d_i=2\quad(i\notin\{p,q\}).
\end{equation}

The following proposition is the finite combinatorial core of the construction.

\begin{prop}\label{prop:polytope}
With the notation above, the following hold.
\begin{enumerate}
\item[(1)] $P_n$ is full-dimensional in $M_\RR$ and contains $0$ in its interior.
\item[(2)] The difference-coordinate map $[c]\mapsto d$, with $d_i=c_{i+1}-c_i$, is a linear isomorphism
\[
N_\RR\xrightarrow{\ \sim\ }\Bigl\{d\in\RR^{n+1}:\sum_{i=0}^n d_i=0\Bigr\}
\]
carrying $N$ onto $\{d\in\ZZ^{n+1}:\sum_i d_i=0\}$; under it,
\begin{equation}\label{eq:Qn-box}
Q_n\cong\Bigl\{d\in\RR^{n+1}:\sum_{i=0}^n d_i=0\text{ and }-\ell\leq d_i\leq 2\text{ for all }i\Bigr\}.
\end{equation}
\item[(3)] The vertices of $Q_n$ are exactly the points $v_{p,q}$ of \eqref{eq:vpq}, indexed by the ordered pairs $p\neq q$ in $\{0,\dots,n\}$; there are $n(n+1)$ of them.
\item[(4)] Every $v_{p,q}$ is a primitive lattice point of $N$.
\item[(5)] The barycenter of $P_n$ is $0$.
\item[(6)] $\displaystyle\max\{\langle u,v\rangle:u\in P_n,\ v\in\Vertt(Q_n)\}=\frac{\ell}{2}=\frac{2n-1}{2}$.
\end{enumerate}
\end{prop}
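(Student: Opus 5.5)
The plan is to reduce everything to the pairings of the generators $a_i,b_i$ with a class $[c]\in N_\RR$, written in difference coordinates. For $[c]$ with $d_i=c_{i+1}-c_i$ one computes directly
\[
\langle a_i,[c]\rangle=\frac{c_i-c_{i+1}}{2}=-\frac{d_i}{2},\qquad
\langle b_i,[c]\rangle=\frac{c_i-c_{i-1}}{\ell}=\frac{d_{i-1}}{\ell}.
\]
Since $P_n$ is the convex hull of the $a_i,b_i$, the polar condition in \eqref{eq:Qn} needs to be checked only on these generators. It therefore reads $d_i\le 2$ and $d_{i-1}\ge -\ell$ for all $i$, which is \eqref{eq:Qn-box} once the coordinate map is shown to be an isomorphism.

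For (2), the map $c\mapsto d$ on $\RR^{n+1}$ has kernel $\RR(1,\dots,1)$ and image contained in $\{\sum d_i=0\}$. By dimension count it is an isomorphism from $N_\RR$ onto that hyperplane. Integer $c$ gives integer $d$. Conversely, given an integer $d$ with $\sum d_i=0$, set $c_0=0$ and $c_k=d_0+\dots+d_{k-1}$; this is an integer preimage, and it is consistent cyclically because $\sum d_i=0$.

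For (1), the vectors $e_i-e_{i+1}$ span $M_\RR$, and $\sum_i a_i=0$ expresses $0$ as a strictly positive combination of spanning vectors, so $0$ lies in the interior. Alternatively, boundedness of the box in \eqref{eq:Qn-box} gives the same conclusion. For (4), every $v_{p,q}$ has the difference coordinate $d_q=1$. Since the lattice isomorphism of (2) identifies $N$ with the integer points of the hyperplane, a vector with an entry equal to $1$ is primitive.

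For (5), the cyclic shift $e_i\mapsto e_{i+1}$ is a linear automorphism of $M_\RR$ sending $a_i\mapsto a_{i+1}$ and $b_i\mapsto b_{i+1}$, so it preserves $P_n$ and hence fixes its barycenter. Its fixed vectors in $\RR^{n+1}$ are the multiples of $(1,\dots,1)$, and the only such vector in $M_\RR$ is $0$. Hence the barycenter is $0$.

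The main step is (3), the vertex enumeration of the hyperplane slice of the box $[-\ell,2]^{n+1}$. A vertex of this $n$-dimensional polytope must make $n$ independent box constraints tight, so at most one coordinate lies strictly between the bounds. Suppose $k$ coordinates equal $-\ell$ and the rest equal $2$, apart from possibly one free coordinate $f$.

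If there is no free coordinate, the condition $\sum d_i=0$ becomes $-k\ell+2(n+1-k)=0$, that is, $k(2n+1)=2n+2$. This has no integer solution.

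If there is one free coordinate, then $f=k\ell-2(n-k)=k(2n+1)-2n$. The requirement $-\ell<f<2$ forces $k=1$, which gives $f=1$. These are exactly the points $v_{p,q}$. Each such point is a genuine vertex, since it is the unique point of the slice satisfying its $n$ tight constraints. The count is $(n+1)n$ ordered pairs.

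For (6), fix a vertex $v=v_{p,q}$. The maximum of $\langle\cdot,v\rangle$ over $P_n$ is attained at a generator. By the formulas above, $\langle a_i,v\rangle=-d_i/2\le \ell/2$, with equality at $i=p$, and $\langle b_i,v\rangle=d_{i-1}/\ell\le 2/\ell$. Since $\ell\ge 3$, we have $2/\ell<\ell/2$, so the maximum equals $\ell/2$.
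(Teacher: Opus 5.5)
Your proposal is correct and follows essentially the same route as the paper. The pairing formulas $\langle a_i,[c]\rangle=-d_i/2$ and $\langle b_i,[c]\rangle=d_{i-1}/\ell$ reduce $Q_n$ to the hyperplane slice of the box $[-\ell,2]^{n+1}$, and your vertex enumeration, primitivity, cyclic-symmetry barycenter and maximal-pairing arguments match the paper's; the only differences are minor, such as counting the coordinates equal to $-\ell$ rather than those equal to $2$, and using linear equivariance of the barycenter instead of measure preservation.
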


\begin{proof}
\emph{(1).} The vectors $e_i-e_{i+1}$, $0\leq i\leq n$, span the hyperplane $M_\RR$: their integer span contains $e_i-e_0$ for $1\leq i\leq n$, and these differences span the subspace of vectors with zero coordinate sum. Hence the vectors $a_i$ span $M_\RR$, and they affinely span it as well. Moreover
\[
\sum_{i=0}^n a_i=\frac12\sum_{i=0}^n(e_i-e_{i+1})=0,
\]
so $0$ is the average $\tfrac1{n+1}\sum_i a_i$ of the $n+1$ points $a_i$ with all coefficients positive. Since the $a_i$ affinely span $M_\RR$, the origin lies in the relative interior of $\conv\{a_0,\dots,a_n\}$, hence in the interior of $P_n$ in $M_\RR$. Thus $P_n$ is full-dimensional and contains $0$ in its interior.

\emph{(2).} The map $[c]\mapsto d$ with $d_i=c_{i+1}-c_i$ is well defined on $N_\RR$, since adding the same scalar to all $c_i$ changes no difference, and its image lies in $\{d:\sum_i d_i=0\}$ by telescoping. It is injective: if all differences vanish then all $c_i$ are equal, so $[c]=0$ in the quotient. It is surjective: given $d$ with $\sum_i d_i=0$, set $c_0=0$ and $c_{j+1}=c_j+d_j$ for $0\leq j\leq n-1$; the zero-sum condition then yields $d_n=c_0-c_n$, so the class $[c]$ maps to $d$. The same construction over $\ZZ$ identifies $N$ with $\{d\in\ZZ^{n+1}:\sum_i d_i=0\}$.

For $y=[c]$ we compute, from \eqref{eq:ai-bi},
\[
\langle a_i,y\rangle=\frac{c_i-c_{i+1}}{2}=-\frac{d_i}{2},
\qquad
\langle b_i,y\rangle=\frac{c_i-c_{i-1}}{\ell}=\frac{d_{i-1}}{\ell}.
\]
Hence $\langle a_i,y\rangle\geq-1$ is equivalent to $d_i\leq 2$, and $\langle b_i,y\rangle\geq-1$ is equivalent to $d_{i-1}\geq-\ell$. Since $P_n$ is the convex hull of the points $a_i$ and $b_i$, the defining inequalities $\langle u,y\rangle\geq-1$ for all $u\in P_n$ reduce to these inequalities for all $a_i$ and $b_i$. As $i$ ranges over $\{0,\dots,n\}$ this gives exactly $-\ell\leq d_i\leq 2$ for all $i$, together with $\sum_i d_i=0$, which is \eqref{eq:Qn-box}.

\emph{(3).} Let $H=\{d\in\RR^{n+1}:\sum_i d_i=0\}$, an $n$-dimensional hyperplane; the polytope \eqref{eq:Qn-box} is $H$ intersected with the box $[-\ell,2]^{n+1}$. At a vertex of this polytope at least $n$ independent coordinate-bound inequalities are active, so at least $n$ of the coordinates equal one of the endpoints $-\ell$ or $2$. Not all $n+1$ coordinates can be endpoints: if $k$ coordinates equal $2$ and the remaining $n+1-k$ equal $-\ell$, the zero-sum condition gives
\[
2k-\ell(n+1-k)=0,
\qquad\text{equivalently}\qquad
(\ell+2)k=\ell(n+1).
\]
Since $\ell=2n-1$ and $\ell+2=2n+1$, we have $\gcd(\ell,\ell+2)=\gcd(2n-1,2)=1$, so $2n+1$ would have to divide $n+1$; this is impossible because $0<n+1<2n+1$ for $n\geq 2$. Therefore exactly $n$ coordinates are endpoints and one coordinate is free.

Suppose that among the $n$ endpoint coordinates $k$ equal $2$ and $n-k$ equal $-\ell$. The free coordinate is then forced by $\sum_i d_i=0$ to be
\[
t=\ell(n-k)-2k=\ell n-(\ell+2)k.
\]
If $k\leq n-2$, then $t\geq\ell n-(\ell+2)(n-2)=2n+2>2$, exceeding the upper bound. If $k=n$, then $t=\ell n-(\ell+2)n=-2n<-\ell$, below the lower bound. The only admissible case is $k=n-1$, giving $t=\ell-2(n-1)=1\in[-\ell,2]$. Hence every vertex has exactly one coordinate equal to $-\ell$, exactly one equal to $1$, and all remaining coordinates equal to $2$; that is, every vertex is one of the points $v_{p,q}$ of \eqref{eq:vpq}.

Conversely, for each ordered pair $p\neq q$ the point $v_{p,q}$ has zero coordinate sum, since
\[
-\ell+1+2(n-1)=-(2n-1)+1+2n-2=0,
\]
and it satisfies the box constraints. The $n$ active equalities $d_p=-\ell$ and $d_i=2$ for $i\notin\{p,q\}$ are independent on $H$ and determine $v_{p,q}$ uniquely there, so $v_{p,q}$ is a vertex. There are $n(n+1)$ ordered pairs $p\neq q$, giving $n(n+1)$ vertices.

\emph{(4).} By the lattice identification of part (2), each $v_{p,q}$ is a lattice point of $N$. Because $v_{p,q}$ has the coordinate $d_q=1$, it cannot be a positive integer multiple $mw$ of another lattice point $w$ with $m\geq 2$, since then every coordinate would be divisible by $m$. Hence each $v_{p,q}$ is primitive.

\emph{(5).} Let $\sigma$ be the cyclic permutation $\sigma(x_0,\dots,x_n)=(x_1,\dots,x_n,x_0)$ of coordinates. It preserves $M_\RR$, and by \eqref{eq:ai-bi} it sends $a_i\mapsto a_{i+1}$ and $b_i\mapsto b_{i+1}$, so $\sigma(P_n)=P_n$. Being induced by a coordinate permutation, $\sigma$ preserves Lebesgue measure on $M_\RR$; therefore it fixes the barycenter $\barc(P_n)$. The fixed subspace of $\sigma$ in $M_\RR$ consists of the vectors with all coordinates equal, intersected with $\{x:\sum_i x_i=0\}$, which is $\{0\}$. Hence $\barc(P_n)=0$.

\emph{(6).} Fix a vertex $v=v_{p,q}$ of $Q_n$, written in difference coordinates $d$. By the formulas in part (2),
\[
\langle a_i,v\rangle=-\frac{d_i}{2},
\qquad
\langle b_i,v\rangle=\frac{d_{i-1}}{\ell}.
\]
As $i,p,q$ vary, the largest value of $\langle a_i,v\rangle$ is $\ell/2$, attained when $d_i=-\ell$, i.e.\ $i=p$. The largest value of $\langle b_i,v\rangle$ is $2/\ell$, since every difference coordinate is at most $2$. Because $n\geq 2$ we have $\ell\geq 3$ and hence $2/\ell<\ell/2$. For each fixed $v$ the function $u\mapsto\langle u,v\rangle$ is linear on $P_n=\conv(\{a_i\}\cup\{b_i\})$, so its maximum over $P_n$ is attained at one of the generating points $a_i$ or $b_i$. Taking the maximum also over all vertices $v_{p,q}$ gives
\[
\max\{\langle u,v\rangle:u\in P_n,\ v\in\Vertt(Q_n)\}=\frac{\ell}{2}=\frac{2n-1}{2}. \qedhere
\]
\end{proof}

\section{The toric variety \texorpdfstring{$X_n$}{Xn}}\label{sec:variety}

Let $\Sigma_n$ be the face fan of the polytope $Q_n$, that is,
\[
\Sigma_n=\{\RR_{\geq0}F:F\text{ is a face of }Q_n\}.
\]
Since $0$ lies in the interior of $Q_n$ by Proposition~\ref{prop:polytope}(1) applied to its polar, $\Sigma_n$ is a complete fan in $N_\RR$. Let $X_n=X_{\Sigma_n}$ be the associated $n$-dimensional normal projective toric variety.

By Proposition~\ref{prop:polytope}(3),(4) the primitive generators of the rays of $\Sigma_n$ are exactly the vertices $v_{p,q}$ of $Q_n$. Writing $D_{p,q}$ for the torus-invariant prime divisor associated with the ray through $v_{p,q}$, the anticanonical divisor of a toric variety is the sum of all torus-invariant prime divisors, so
\[
-K_{X_n}=\sum_{p\neq q}D_{p,q}.
\]
The polytope of $-K_{X_n}$ is the set of characters $u$ with $\langle u,v\rangle\geq-1$ for every ray generator $v$, which by definition of the polar is
\[
P_{-K_{X_n}}=\{u\in M_\RR:\langle u,v_{p,q}\rangle\geq-1\text{ for all }p\neq q\}=Q_n^\vee=P_n.
\]
Each facet of $Q_n$ has a supporting character $u_F\in M_\QQ$ with $\langle u_F,v\rangle=-1$ for $v$ in the facet; this is the local Cartier datum of $-K_{X_n}$ on the corresponding maximal cone, so $-K_{X_n}$ is $\QQ$-Cartier. As $P_n=P_{-K_{X_n}}$ is bounded and full-dimensional by Proposition~\ref{prop:polytope}(1), the divisor $-K_{X_n}$ is ample. Finally, on the cone over a facet $F$ any nonzero vector $w=\sum_j\lambda_j v_j$ with $\lambda_j\geq0$ has positive log discrepancy
\[
A_{X_n}(w)=-\langle u_F,w\rangle=\sum_j\lambda_j>0,
\]
so $X_n$ is klt. We summarize.

\begin{prop}\label{prop:qfano}
For every integer $n\geq 2$, the variety $X_n=X_{\Sigma_n}$ is an $n$-dimensional toric $\QQ$-Fano variety, with $-K_{X_n}$ ample and anticanonical polytope $P_{-K_{X_n}}=P_n$.
\end{prop}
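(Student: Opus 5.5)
The plan is to assemble the statement from Proposition~\ref{prop:polytope} together with the standard toric dictionary, in four checks: completeness and projectivity of $X_n$, the identification of the anticanonical polytope, the $\QQ$-Cartier and ampleness property of $-K_{X_n}$, and the klt condition.

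\emph{Completeness and projectivity.} Since $P_n$ contains $0$ in its interior by Proposition~\ref{prop:polytope}(1), its polar $Q_n=P_n^\vee$ is a bounded full-dimensional polytope that also contains $0$ in its interior. Hence the face fan $\Sigma_n$ is a complete fan, and $X_n$ is a complete normal toric variety of dimension $n$. Projectivity will come for free once ampleness of $-K_{X_n}$ is established.

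\emph{Anticanonical polytope.} Proposition~\ref{prop:polytope}(3),(4) identifies the primitive ray generators of $\Sigma_n$ with the vertices $v_{p,q}$. Thus $-K_{X_n}=\sum D_{p,q}$ has polytope
\[
\{u\in M_\RR:\langle u,v_{p,q}\rangle\ge -1 \text{ for all } p\neq q\}.
\]
Because $Q_n$ is the convex hull of its vertices, this set equals $Q_n^\vee$, and by bipolarity for polytopes with $0$ in the interior, $Q_n^\vee=P_n$.

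\emph{$\QQ$-Cartier and ampleness.} Every maximal cone of $\Sigma_n$ is the cone over a facet $F$ of $Q_n$. The affine hull of $F$ is $\{\langle u_F,\cdot\rangle=-1\}$ for a unique $u_F\in M_\QQ$, and these $u_F$ are exactly the vertices of $P_n$. The vertices of $P_n$ are rational, being among the $a_i,b_i$. Since $\langle u_F,v\rangle=-1$ for all ray generators $v$ in $F$, the character $u_F$ is local Cartier data for $-K_{X_n}$ on that cone, so $-K_{X_n}$ is $\QQ$-Cartier.

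For ampleness I would use the toric criterion: the support function $\psi(w)=\min_F\langle u_F,w\rangle$ must be strictly convex with respect to $\Sigma_n$, meaning the $u_F$ are pairwise distinct and for each maximal cone $\sigma_F$ we have $\langle u_F,w\rangle<\langle u_{F'},w\rangle$ for $w$ in the interior of $\sigma_F$ and $F'\neq F$. This is precisely the statement that the face fan of $Q_n$ coincides with the normal fan of $P_n=Q_n^\vee$, which is a standard duality fact. Thus the polytope of $-K_{X_n}$ is full-dimensional with normal fan $\Sigma_n$, and $-K_{X_n}$ is ample. This compatibility between the face fan of $Q_n$ and the normal fan of $P_n$ is the only nonformal point, and I expect it to be the main thing to state carefully. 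I would cite it rather than reprove it.

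\emph{Klt.} Write $w\in\sigma_F\setminus\{0\}$ as $w=\sum\lambda_j v_j$ with $\lambda_j\ge0$, not all zero. Then the log discrepancy of the toric valuation is $A(w)=-\langle u_F,w\rangle=\sum\lambda_j>0$. Positivity of all toric log discrepancies gives klt for toric pairs.

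Combining these four checks, $X_n$ is a toric $\QQ$-Fano variety of dimension $n$ with $-K_{X_n}$ ample and anticanonical polytope $P_{-K_{X_n}}=P_n$, which is Proposition~\ref{prop:qfano}.
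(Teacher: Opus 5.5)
Your proposal is correct and follows the paper's argument step for step. You establish completeness of the face fan, identify $P_{-K_{X_n}}=Q_n^\vee=P_n$, use the facet characters $u_F$ as local Cartier data, and check klt via $A_{X_n}(w)=-\langle u_F,w\rangle=\sum_j\lambda_j>0$. Your ampleness step is in fact more careful than the paper's: the paper deduces ampleness only from $P_n$ being bounded and full-dimensional, whereas the real reason is the one you isolate, namely that the normal fan of $P_n=Q_n^\vee$ is the face fan $\Sigma_n$.
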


\section{K-polystability and the alpha invariant}\label{sec:kstab-alpha}

We now deduce K-polystability and compute the alpha invariant from the polytope data of Proposition~\ref{prop:polytope}, and prove Theorem~\ref{thm:main}.

\begin{prop}\label{prop:kss}
For every integer $n\geq 2$, the toric $\QQ$-Fano variety $X_n$ is K-polystable, hence K-semistable.
\end{prop}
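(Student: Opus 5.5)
The plan is to invoke the toric criterion for K-polystability: a toric $\QQ$-Fano variety $X$ is K-polystable if and only if the barycenter of its anticanonical polytope $P_{-K_X}$ (with respect to Lebesgue measure on $M_\RR$) is the origin. This is Berman's theorem for toric log Fano varieties, refined to the singular $\QQ$-Fano setting; it is also consistent with the valuative criterion of Fujita--Li applied to toric valuations. By Proposition~\ref{prop:qfano}, $X_n$ is a toric $\QQ$-Fano variety with klt singularities and $P_{-K_{X_n}}=P_n$. By Proposition~\ref{prop:polytope}(5), $\barc(P_n)=0$. The criterion therefore applies directly and gives K-polystability.

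The key steps, in order, are as follows.

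\textbf{Hypotheses.} First recall that the criterion requires only that $X$ be a projective toric variety with klt singularities and $-K_X$ an ample $\QQ$-Cartier divisor. All of these were verified in Section~\ref{sec:variety}.

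\textbf{Normalization.} Second, identify the polytope used in the criterion with $P_n$. The criterion is stated for the moment polytope $P_{-K_X}=\{u:\langle u,v_\rho\rangle\ge -1\}$, normalized so that the torus-invariant anticanonical divisor is $\sum D_\rho$. This is exactly the normalization used in Proposition~\ref{prop:qfano}, so there is no translation ambiguity and the barycenter condition is meaningful as stated. Note also that the barycenter is independent of the choice of Haar measure normalization on $M_\RR$, so using Lebesgue measure on the hyperplane $\sum x_i=0$ causes no issue.

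\textbf{Conclusion.} Third, combine with Proposition~\ref{prop:polytope}(5) to conclude that $X_n$ is K-polystable. K-semistability then follows because K-polystability implies K-semistability by definition.

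The only potential obstacle is citing the criterion in the singular setting rather than for smooth toric Fano manifolds (Wang--Zhu). I would cite Berman's result for $\QQ$-Fano toric varieties, or alternatively the computation of $\beta$-invariants of toric divisors. In the latter approach, for a toric prime divisor $E$ over $X$ given by a primitive $w\in N$, one has
\[
\beta(E)=A_X(w)\,\mathrm{vol}(P)+\mathrm{vol}(P)\,\langle \barc(P),w\rangle ,
\]
up to normalization. This shows that barycenter zero gives $\beta=A_X(w)\,\mathrm{vol}(P)>0$ for every $w\ne0$. Since toric K-polystability may be tested on toric valuations, with $\beta$ vanishing only along the trivial torus directions, this yields polystability. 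I would present the short citation-based argument, with this formula as a remark.
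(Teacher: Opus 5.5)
Your main argument is the same as the paper's proof: Berman's toric criterion, applied to $P_{-K_{X_n}}=P_n$ (Proposition~\ref{prop:qfano}) and $\barc(P_n)=0$ (Proposition~\ref{prop:polytope}(5)). It is correct. Berman's corollary is already stated for singular toric $\QQ$-Fano varieties, so no further refinement is needed.

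Do not include the $\beta$-invariant remark as written, because the formula is false.

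For the toric divisor $E$ over $X$ given by a primitive $w\in N$, one has $A_X(w)=-\min_{u\in P}\langle u,w\rangle$. The polytope of $-K_X-tE$ is $\{u\in P:\langle u,w\rangle+A_X(w)\geq t\}$, so
\[
\int_0^\infty\mathrm{vol}(-K_X-tE)\,dt=n!\int_P\bigl(\langle u,w\rangle+A_X(w)\bigr)\,du=n!\,\mathrm{vol}(P)\bigl(\langle\barc(P),w\rangle+A_X(w)\bigr).
\]
Hence
\[
\beta(E)=A_X(E)\,\mathrm{vol}(-K_X)-\int_0^\infty\mathrm{vol}(-K_X-tE)\,dt=-\,n!\,\mathrm{vol}(P)\,\langle\barc(P),w\rangle .
\]
The log discrepancy cancels, because $S(E)$ itself contains $A_X(w)$. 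So barycenter zero gives $\beta(E)=0$ for every toric $E$, not $\beta(E)=A_X(w)\,\mathrm{vol}(P)>0$.

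Your positivity claim is impossible for any toric $\QQ$-Fano variety. Up to a positive constant, $\beta(v_w)$ is the Futaki invariant of the product test configuration induced by the one-parameter subgroup $w$. That invariant is linear in $w$, so $\beta(v_{-w})=-\beta(v_w)$, and it cannot be positive at both $w$ and $-w$. Your formula would instead give $\beta(v_w)+\beta(v_{-w})=(A_X(w)+A_X(-w))\,\mathrm{vol}(P)>0$.

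The correct valuative version is this. The map $w\mapsto\beta(v_w)$ is the linear form $-n!\,\mathrm{vol}(P)\langle\barc(P),\cdot\rangle$, which is nonnegative on $N$ if and only if $\barc(P)=0$. Polystability then uses the fact that every toric valuation comes from a product test configuration, so vanishing $\beta$ is exactly what polystability permits. This is what Berman's criterion packages, so the short citation-based proof is the one to present.
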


\begin{proof}
By Berman's toric criterion \cite[Corollary~1.2]{Ber16}, a toric $\QQ$-Fano variety is K-polystable if and only if the barycenter of its anticanonical weight polytope is the origin. The anticanonical polytope of $X_n$ is $P_n$ by Proposition~\ref{prop:qfano}, and $\barc(P_n)=0$ by Proposition~\ref{prop:polytope}(5). Hence $X_n$ is K-polystable, and in particular K-semistable.
\end{proof}

\begin{prop}\label{prop:alpha}
For every integer $n\geq 2$, the alpha invariant of $X_n$ is
\[
\alpha(X_n)=\frac{2}{2n+1}.
\]
\end{prop}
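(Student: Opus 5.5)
We will use the standard toric formula for the alpha invariant of a toric $\QQ$-Fano variety. If $P=P_{-K_X}$ is the anticanonical polytope and $v_\rho$ are the primitive ray generators, then
\[
\alpha(X)=\min_{\rho}\frac{1}{1+\max_{u\in P}\langle u,v_\rho\rangle}.
\]
Equivalently, $\alpha(X)$ is the largest $t$ with $-t\,P\subseteq \frac{1-t}{1}\cdots$. This formula goes back to the computations of Song and of Cheltsov--Shramov, and extends to the singular $\QQ$-Fano setting by Blum--Jonsson. Briefly, it holds because torus-invariant divisors compute the alpha invariant of a toric variety, and because torus-invariant valuations $w\in N_\RR$ have $A_X(w)$ linear on cones with $A_X(v_\rho)=1$. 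We will cite this result and then plug in Proposition~\ref{prop:polytope}.

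To keep the argument self-contained, we also sketch why the formula holds.

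\emph{Reduction to invariant divisors.} Degenerating any $D\sim_\QQ -K_X$ to a torus-invariant one via a one-parameter subgroup does not increase the lct, by lower semicontinuity. Hence the infimum may be taken over invariant $D=\sum_\rho c_\rho D_\rho$. Such divisors correspond to points $u\in P$ through $c_\rho=\langle u,v_\rho\rangle+1\ge 0$.

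\emph{Computing the lct of an invariant divisor.} For an invariant $D$, the lct is computed by toric valuations:
\[
\lct(X;D)=\min_{w\neq 0}\frac{A_X(w)}{\operatorname{ord}_w D}.
\]
Here $\operatorname{ord}_w D=\langle u,w\rangle+A_X(w)$, with $A_X(w)=-\langle u_F,w\rangle$ on the cone over a facet $F$. The quotient $\operatorname{ord}_w D/A_X(w)$ is linear-fractional on each cone, so its extremum occurs at ray generators, where $A_X=1$. This gives
\[
\lct(X;D)=\frac{1}{1+\max_\rho\langle u,v_\rho\rangle}.
\]
Minimizing over $u\in P$ then yields the formula above.

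\emph{Plugging in the data.} By Proposition~\ref{prop:qfano} the rays are the $v_{p,q}$ and $P=P_n$. By Proposition~\ref{prop:polytope}(6), the maximum of $\langle u,v\rangle$ over $u\in P_n$ and vertices $v$ of $Q_n$ is $\ell/2=(2n-1)/2$. Therefore
\[
\alpha(X_n)=\frac{1}{1+\frac{2n-1}{2}}=\frac{2}{2n+1}.
\]
The minimum is attained concretely at $u=a_p$, $v=v_{p,q}$. The corresponding divisor is $D=\sum_{(p',q')}(\langle a_p,v_{p',q'}\rangle+1)D_{p',q'}$. It has coefficient $1+\ell/2$ along $D_{p,q}$, which gives the matching upper bound $\alpha\le 2/(2n+1)$ directly.

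The main obstacle is justifying the toric alpha formula for singular, non-Gorenstein $\QQ$-Fano varieties, in particular the reduction from arbitrary $D$ to torus-invariant $D$. We intend to cite it rather than reprove it: either as stated in the literature for $\QQ$-Fano toric varieties, or via the identity $\alpha(X)=\inf_v A_X(v)/T(v)$. In the latter approach, $T$ is the pseudo-effective threshold, and for toric valuations $T(w)=A_X(w)+\max_{u\in P}\langle u,w\rangle$, which reduces to the same computation.
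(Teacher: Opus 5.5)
Your proposal is correct and follows the paper's route: cite the Blum--Jonsson toric formula $\alpha(X)=1/\max_{u\in P,\,v\in V}(1+\langle u,v\rangle)$ and plug in Proposition~\ref{prop:polytope}(6). Your additional sketch of that formula (degeneration to invariant divisors via a generic one-parameter subgroup, then averaging over each cone) and the explicit divisor built from $u=a_p$, which gives the upper bound, are both correct; the only blemish is the unfinished sentence after the formula, which should say that $\alpha(X)$ is the largest $t$ with $-tP\subseteq(1-t)P$.
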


\begin{proof}
For a toric $\QQ$-Fano variety with anticanonical polytope $P$ and set of primitive ray generators $V$, the ordinary alpha invariant is given by the toric formula
\begin{equation}\label{eq:toric-alpha}
\alpha(X)=\frac{1}{\displaystyle\max_{u\in P,\,v\in V}\bigl(1+\langle u,v\rangle\bigr)}
\end{equation}
of Blum and Jonsson \cite[Corollary~7.16]{BJ17}. For $X_n$ the polytope is $P=P_n$ and the ray generators are $V=\Vertt(Q_n)$ by Propositions~\ref{prop:polytope} and~\ref{prop:qfano}. By Proposition~\ref{prop:polytope}(6),
\[
\max_{u\in P_n,\,v\in\Vertt(Q_n)}\langle u,v\rangle=\frac{2n-1}{2},
\]
so the maximum in \eqref{eq:toric-alpha} equals $1+\tfrac{2n-1}{2}=\tfrac{2n+1}{2}$. Therefore
\[
\alpha(X_n)=\frac{1}{\tfrac{2n+1}{2}}=\frac{2}{2n+1}. \qedhere
\]
\end{proof}

\begin{proof}[Proof of Theorem~\ref{thm:main}]
Fix $n\geq 2$. By Proposition~\ref{prop:qfano}, $X_n$ is an $n$-dimensional toric $\QQ$-Fano variety. By Proposition~\ref{prop:kss} it is K-polystable, hence K-semistable, and by Proposition~\ref{prop:alpha} its alpha invariant equals $\tfrac{2}{2n+1}$, which is \eqref{eq:alpha-value}. 
\end{proof}

\begin{rem}\label{rem:scope}
The construction answers part (1) of \cite[Question~1.5]{LZ22} for every $n\geq 2$, and K-polystability is stronger than the K-semistability requested. We make no claim regarding part (2) of that question.
\end{rem}




\bibliographystyle{alpha}
\bibliography{ref}

\end{document}